\newtheorem{theorem}{Theorem}[section]
\newtheorem{lemma}[theorem]{Lemma}
\newtheorem{corollary}[theorem]{Corollary}
\newtheorem{proposition}[theorem]{Proposition}
\theoremstyle{definition}
\theoremstyle{remark}
\newtheorem{remark}[theorem]{Remark}
\numberwithin{equation}{section}
\begin{document}
\title[Gradient estimates for a nonlinear parabolic equation with boundary]
{Gradient estimates for a nonlinear parabolic equation with Dirichlet boundary condition}

\author{Xuenan Fu}
\address{Department of Mathematics, Shanghai University, Shanghai 200444, China}
\email{xuenanfu97@163.com}

\author{Jia-Yong Wu}
\address{Department of Mathematics, Shanghai University, Shanghai 200444, China}
\email{wujiayong@shu.edu.cn}

\let\thefootnote\relax\footnotetext{2020 \textit{Mathematics Subject Classification}.
Primary 58J35; Secondary 35B53}

\keywords{Smooth metric measure space, Bakry-Emery Ricci curvature, manifold with boundary,
gradient estimate, Liouville theorem}

\begin{abstract}
In this paper, we prove Souplet-Zhang type gradient estimates for a nonlinear parabolic equation
on smooth metric measure spaces with the compact boundary under the Dirichlet boundary condition when
the Bakry-Emery Ricci tensor and the weighted mean curvature are both bounded below. As an
application, we obtain a new Liouville type result for some space-time functions on such smooth metric
measure spaces. These results generalize previous linear equations to a nonlinear case.
\end{abstract}
\maketitle

\section{Introduction}\label{Int1}

In \cite{Yau}, Yau proved gradient estimates for harmonic functions on complete manifolds
with the Ricci curvature bounded below. In \cite{[SZ]}, Souplet and Zhang generalized Yau's
gradient estimate to the heat equation by adding a necessary logarithmic correction term.
Recently, Kunikawa and Sakurai \cite{KS20} extended Yau and Souplet-Zhang type gradient
estimates to the case of manifolds with the boundary under some Dirichlet boundary condition.
Shortly later, H. Dung, N. Dung and Wu \cite{DDW} generalized Kunikawa-Sakurai results
to the $f$-Laplacian equation and the $f$-heat equation on smooth metric measure spaces with
the compact boundary under some Dirichlet boundary condition; see also N. Dung and Wu \cite{{DW}}
for further generalizations in this direction.

From above, we see that previous gradient estimates on manifolds with the boundary only focused
on linear equations. In this paper we will investigate Souplet-Zhang type gradient estimates for
a \emph{nonlinear} parabolic equation on smooth metric measure spaces with the compact boundary
under some Dirichlet boundary condition.

Let $(M,g)$ be an $n$-dimensional
complete Riemannian manifold, and let $f$ be a smooth potential function on $M$. The triple
$(M,g,e^{-f}dv_g)$ is called a smooth metric measure space, where $dv_g$ is the Riemannian volume
element of metric $g$ and $e^{-f}dv_g$ is the weighted volume element. On $(M,g,e^{-f}dv_g)$,
Bakry and Emery \cite{[BE]} introduced the Bakry-Emery Ricci tensor
\[
\mathrm{Ric}_f:=\mathrm{Ric}+\mathrm{Hess}\,f,
\]
where $\mathrm{Ric}$ is the Ricci tensor of $(M,g)$ and $\mathrm{Hess}$ is the
Hessian with respect to metric $g$. The Bakry-Emery Ricci tensor is a natural
generalization of Ricci curvature on Riemannian manifolds, which plays an
important role in smooth metric measure spaces, see for example \cite{LV},
\cite{[WW]}, \cite{Wj13}, \cite{Wj19}, \cite{[WW1]}, \cite{[WW2]} and
references therein. In particular, a smooth metric measure space satisfying
\[
\mathrm{Ric}_f=\lambda g
\]
for some $\lambda\in \mathbb{R}$ is called a gradient Ricci soliton,
which can be considered as a natural generalization of an Einstein manifold.
The gradient Ricci soliton is called shrinking, steady, or expanding, if $\lambda>0$,
$\lambda=0$, or $\lambda<0$, respectively. Gradient Ricci solitons play
important roles in the Ricci flow and Perelman's resolution of the Poincar\'e
conjecture, see for example \cite{Cao}, \cite{[Ham]}, \cite{[P1]}, \cite{[P2]}, \cite{[P3]}
and references therein. Our interest of this paper in the Bakry-Emery Ricci
tensor is largely due to a nonlinear parabolic equation related to gradient Ricci solitons.

On smooth metric measure space $(M,g,e^{-f}dv_g)$,
the $f$-Laplacian (also called the weighted Laplacian or the Witten Laplacian) is defined by
\[
\Delta_f:=\Delta-\nabla f\cdot\nabla,
\]
which is usually linked with the Bakry-Emery Ricci tensor by the
following generalized Bochner-Weitzenb\"ock formula
\[
\Delta_f|\nabla u|^2=2|\mathrm{Hess}\,u|^2+2\langle\nabla\Delta_f u, \nabla u\rangle+2\mathrm{Ric}_f(\nabla u, \nabla u)
\]
for any $u\in C^\infty(M)$. When $\mathrm{Ric}_f$ is bounded below and $f$ is assumed to
some restriction, there have been a lot of work in this direction; see for example
\cite{[MuWa]}, \cite{[MuWa2]}, \cite{[WW]}, \cite{Wj13}, \cite{[WW1]} and \cite{[WW2]},
etc.. On $(M,g,e^{-f}dv_g)$ with the compact boundary $\partial M$, the associated
$f$-mean curvature (also called weighted mean curvature) is defined by
\[
\mathrm{H}_f:=\mathrm{H}-\nabla f\cdot\nu,
\]
where $\nu$ is the unit outer normal vector to $\partial M$ and $\mathrm{H}$ is the mean curvature of
$\partial M$ with respect to $\nu$. When $f$ is constant, the above notations all return to
the manifold case.

We now study Souplet-Zhang gradient estimates for positive solutions to the nonlinear
parabolic equation
\begin{equation}\label{nonlieq}
u_t=\Delta_fu+au\ln u,
\end{equation}
where $a\in \mathbb{R}$, on $(M,g,e^{-f}dv)$ with compact boundary $\partial M$ under some Dirichlet
boundary condition. It is known that all solutions to its Cauchy problem exist for all
time. Equation \eqref{nonlieq} is closely related to gradient Ricci solitons and weighted
log-Sobolev constants; see for example \cite{Ma} and \cite{Wj} for detailed explanations.
By improving the argument of \cite{DDW} especially for the boundary case, we prove
Souplet-Zhang type gradient estimates for positive bounded solutions to the equation
\eqref{nonlieq} without any assumption on the potential function $f$.
\begin{theorem}\label{NL}
Let $(M,g,e^{-f}dv)$ be an $n$-dimensional smooth metric measure space with the compact boundary.
Assume that
\[
{\rm Ric}_f\ge-(n-1)K\quad \mathrm{and} \quad \mathrm{H}_f\ge-L
\]
for some non-negative constants $K$ and $L$. Let $0<u\le B$ for some positive constant $B$ be a solution
to \eqref{nonlieq} in
\[
Q_{R, T}(\partial M):=B_R(\partial M)\times[-T, 0]\subset M\times (-\infty,\infty),
\]
where $B_R(\partial M):=\{x|d(x,\partial M)<R\}$ and $T>0$. If $u$ satisfies the Dirichlet
boundary condition (that is, $u(x,t)|_{\partial M}$ is constant for each time slice $t\in[-T, 0]$),
\[
u_\nu\ge 0 \quad \mathrm{and} \quad u_t\le a u\ln u
\]
over $\partial M\times[-T, 0]$, then there exists a constant $c(n)$ depending on $n$ such that
\[
\sup\limits_{Q_{R/2, T/2}(\partial M)}\frac{|\nabla u|}{u}
\le c(n)\left(\frac{\sqrt{D}+1}{R}+\frac{1}{\sqrt{T}}+\sqrt{K}+L+\sqrt{E}\right)\sqrt{1+\ln\frac{B}{u}},
\]
where $D:=1+\ln B-\ln(\inf_{Q_{R,T}(\partial M)}u)$ and
$E:=\max\{(n-1)K+\tfrac a2,0\}+\max\{\tfrac a3(1+\ln B),0\}$.
\end{theorem}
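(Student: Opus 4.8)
The plan is to run the Souplet--Zhang maximum-principle argument in the weighted, manifold-with-boundary setting of \cite{KS20,DDW}, the new ingredients being the contribution of the term $au\ln u$ to the Bochner computation and to the Dirichlet boundary analysis. First I would normalize: put
\[
v:=\ln(u/B)\le 0,\qquad \varphi:=1-v\ge 1,\qquad w:=\frac{|\nabla v|^2}{\varphi},
\]
so that $|\nabla u|/u=|\nabla v|=\sqrt{w\varphi}$ and $1+\ln(B/u)=\varphi$; the assertion is thus equivalent to the bound $\sqrt{w}\le c(n)(\,\cdot\,)$ on $Q_{R/2,T/2}(\partial M)$. From \eqref{nonlieq}, $v$ solves $v_t=\Delta_f v+|\nabla v|^2+a\ln u$, where $\ln u=(1+\ln B)-\varphi$.

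Next, writing $F:=|\nabla v|^2$, I would differentiate the equation for $v$, substitute into the generalized Bochner--Weitzenb\"ock formula, invoke $\mathrm{Ric}_f\ge-(n-1)K$, and then compute $w_t-\Delta_f w$ in terms of $F$ and $\varphi$. After the quartic terms collapse to a single $-w^2$ and the nonpositive term $-2\varphi^{-1}|\mathrm{Hess}\,v|^2$ is discarded, this should give
\[
w_t-\Delta_f w\le -w^2+\big(2(n-1)K+a\big)w+a(1+\ln B)\,\frac{w}{\varphi}+2\Big(1-\frac1\varphi\Big)\langle\nabla w,\nabla v\rangle .
\]
Since $0<\varphi^{-1}\le 1$, a routine splitting (Young's inequality, which is where the constants $\tfrac12,\tfrac13$ in $E$ originate) bounds the three middle terms by $2E\,w$, leaving $w_t-\Delta_f w\le -w^2+2E\,w+2(1-\varphi^{-1})\langle\nabla w,\nabla v\rangle$.

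Then I would take a space--time cutoff $\psi(x,t)=\varphi_1(d(x,\partial M))\,\varphi_2(t)$ with $\varphi_1\equiv 1$ on $[0,R/2]$, $\varphi_1\equiv 0$ on $[R,\infty)$, $\varphi_1'\le 0$, $|\varphi_1'|^2/\varphi_1+|\varphi_1''|\le c(n)/R^2$, and $\varphi_2\equiv 1$ on $[-T/2,0]$, $\varphi_2(-T)=0$, $|\varphi_2'|\le c/T$, and examine a maximum point $(x_0,t_0)$ of $\psi w$ over the compact set $\overline{Q_{R,T}(\partial M)}$ (using Calabi's argument so that $d(\cdot,\partial M)$ may be taken smooth near $x_0$). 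If $d(x_0,\partial M)=R$ or $t_0=-T$ then $\psi w\equiv 0$ and we are done. The decisive case is $x_0\in\partial M$: since $\varphi_1'(0)=0$ and $\psi(x_0)=1$, maximality forces $\partial_\nu(\psi w)(x_0,t_0)=\partial_\nu w(x_0,t_0)\ge 0$; on $\partial M$ the Dirichlet condition gives $\nabla v=v_\nu\nu$ with $v_\nu=u_\nu/u\ge 0$, while $u_t\le au\ln u$ together with \eqref{nonlieq} forces $\Delta_f u=u_t-au\ln u\le 0$, hence $\Delta_f v=\Delta_f u/u-|\nabla v|^2\le -v_\nu^2$; feeding this and $\mathrm{H}_f\ge-L$ into the boundary identity $\mathrm{Hess}\,v(\nu,\nu)=\Delta_f v-\mathrm{H}_f v_\nu$ (valid since $v$ is constant on $\partial M$) and using $2\varphi-1\ge\varphi$ gives
\[
\partial_\nu w=\frac{2v_\nu\,\mathrm{Hess}\,v(\nu,\nu)}{\varphi}+\frac{v_\nu^3}{\varphi^2}\le\frac{2Lv_\nu^2}{\varphi}-\frac{v_\nu^3(2\varphi-1)}{\varphi^2}\le w\,(2L-v_\nu).
\]
If $v_\nu(x_0,t_0)>2L$ this contradicts $\partial_\nu w\ge 0$; otherwise $w(x_0,t_0)\le v_\nu^2\le 4L^2$, so $\psi w\le 4L^2$ everywhere and $\sqrt{w}\le 2L$ on $Q_{R/2,T/2}(\partial M)$, comfortably within the claimed bound.

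In the remaining case $(x_0,t_0)$ is an interior space--time maximum, where $\nabla(\psi w)=0$, $(\psi w)_t\ge 0$, $\Delta_f(\psi w)\le 0$. Substituting the differential inequality into $0\le(\psi w)_t-\Delta_f(\psi w)=\psi(w_t-\Delta_f w)+w(\psi_t-\Delta_f\psi)-2\langle\nabla\psi,\nabla w\rangle$, using $\psi\nabla w=-w\nabla\psi$, and dividing by $w(x_0,t_0)$ (if it vanishes we are done) gives
\[
\psi w\le 2E\psi+\psi_t-\Delta_f\psi+\frac{2|\nabla\psi|^2}{\psi}-2\Big(1-\frac1\varphi\Big)\langle\nabla\psi,\nabla v\rangle .
\]
Here $|\psi_t|\le c/T$ and $|\nabla\psi|^2/\psi\le c(n)/R^2$; the weighted Laplacian comparison for $d(\cdot,\partial M)$ (legitimate because $\mathrm{Ric}_f\ge-(n-1)K$, $\mathrm{H}_f\ge-L$ and $\varphi_1'$ is supported in $\{R/2\le d\le R\}$) yields $|\Delta_f\psi|\le c(n)(R^{-2}+R^{-1}\sqrt K+R^{-1}L)$; and the last term is $\le 2|\nabla\psi|\sqrt{w\varphi}\le\tfrac12\psi w+c(n)\varphi R^{-2}\le\tfrac12\psi w+c(n)D R^{-2}$ by Young's inequality and $\varphi\le D$ on $Q_{R,T}(\partial M)$. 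Absorbing $\tfrac12\psi w$ and using $\psi\equiv 1$ on $Q_{R/2,T/2}(\partial M)$ yields $\sup_{Q_{R/2,T/2}(\partial M)}w\le c(n)\big(E+T^{-1}+(1+D)R^{-2}+R^{-1}\sqrt K+R^{-1}L\big)$; taking square roots and simplifying with $\sqrt{1+D}\le 1+\sqrt D$, $\sqrt{\sqrt K/R}\le\tfrac12(\sqrt K+R^{-1})$ and $\sqrt{L/R}\le\tfrac12(L+R^{-1})$ gives precisely the stated estimate. I expect the boundary analysis above to be the crux: the only leverage at a boundary maximum is the trio $u|_{\partial M}$ constant, $u_\nu\ge 0$, $u_t\le au\ln u$ together with the special form of $au\ln u$ (this is exactly what converts $u_t\le au\ln u$ into $\Delta_f v\le-|\nabla v|^2$ on $\partial M$), and the resulting normal-derivative estimate must be arranged to close, which is the role of the elementary inequality $2\varphi-1\ge\varphi$.
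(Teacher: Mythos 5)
Your proposal is correct and follows essentially the same route as the paper: the same Bochner-type differential inequality for a normalized gradient quantity (your $w=|\nabla v|^2/\varphi$ equals $4|\nabla h|^2$ in the paper's notation), the same cutoff/maximum-principle argument using the weighted Laplacian comparison for $\rho=d(\cdot,\partial M)$ on $B_R(\partial M)$, and the same boundary-point analysis via the weighted Reilly identity $\tfrac12\bigl(|\nabla v|^2\bigr)_\nu=v_\nu\bigl(\Delta_f v-\mathrm{H}_f v_\nu\bigr)$ combined with the Dirichlet condition, $u_\nu\ge0$ and $u_t\le au\ln u$. The only blemish is the intermediate claim $|\Delta_f\psi|\le c(n)\bigl(R^{-2}+R^{-1}\sqrt K+R^{-1}L\bigr)$: the term $|\psi_\rho|\,(n-1)KR$ is bounded by $c(n)K$, not by $c\sqrt K/R$, but this is harmless since $\sqrt{c(n)K}=c(n)\sqrt K$ still produces exactly the $\sqrt K$ appearing in the stated estimate.
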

\begin{remark}
When $a=0$ and $L=0$, the theorem returns to linear cases \cite{KS20} and \cite{DDW}. Notice
that our weighted mean curvature assumption here could be bounded below by a non-negative
constant (not just non-negative) and hence it indicates that our result is suitable to a
little more general setting.
\end{remark}
In Theorem \ref{NL}, ${\rm Ric}_f\ge-(n-1)K$ means that the infimum of ${\rm Ric}_f$
on the unit tangent bundle in the interior of $M$ is more than $-(n-1)K$, while $\mathrm{H}_f\ge-L$
means that boundary $\partial M$ has some weak convex property. In previous works, Kunikawa and Sakurai
\cite{KS20} proved Souplet-Zhang type gradient estimates for the heat equations under some Dirichlet
boundary condition; H. Dung, N. Dung and Wu \cite{DDW} generalized their result to the
$f$-heat equation; N. Dung and Wu \cite{DW} further studied the case of the $f$-heat equation.
Now we further generalize these linear cases to a nonlinear parabolic equation.

The second author \cite{Wj} ever proved that Souplet-Zhang gradient estimates for positive
bounded solutions to equation \eqref{nonlieq} only hold for radius $R\ge2$ of a geodesic
ball on non-compact manifolds without boundary, but the gradient estimates in Theorem
\ref{NL} could hold for any radius $R>0$. Because in the proof of our case we may apply
a new weighted Laplacian comparison on neighborhoods of the boundary (see Theorem \ref{Lapcom}
in Section \ref{sec2}) instead of the Wei-Wylie comparison \cite{[WW]}.

The proof of Theorem \ref{NL} employs the arguments of \cite{KS20}, \cite{DDW} and \cite{Wj}.
The outline of the proof is as follows. In the interior of the manifold, as in \cite{Wj},
we essentially use the standard Souplet-Zhang argument for the nonlinear parabolic equation.
On the boundary of the manifold, adapting arguments of \cite{KS20} and \cite{DDW},
we apply a derivative equality (see Proposition \ref{Reilly} in Section \ref{sec2}) to
prove the desired estimate. Compared with the previous proof, here we need to carefully deal
with an extra nonlinear term.

Finally we would like to mention that there exist some geometric results on manifolds with the
Neumann boundary condition, e.g. \cite{Ch}, \cite{WaJ}, \cite{[LY]} and \cite{Ol}.

As a consequence of Theorem \ref{NL}, we get the following Liouville type result.
\begin{corollary}\label{liou2}
Let $(M, g, e^{-f}dv)$ be a complete smooth metric measure space with compact boundary
satisfying
\[
{\rm Ric}_f\ge 0 \quad \mathrm{and} \quad \mathrm{H}_f\ge 0.
\]
Let $1\le u\le B$ for some constant $B$ be an ancient solution
to \eqref{nonlieq} satisfying $a\le 0$ with the Dirichlet boundary condition. If
\[
u_\nu\ge 0 \quad \mathrm{and} \quad u_t\le au\ln u
\]
over $\partial M\times(-\infty, 0]$, then $u$ is constant if $a=0$; $u(t)=\exp\{ce^{at}\}$
for some constant $c\le0$ if $a<0$.
\end{corollary}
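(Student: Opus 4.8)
The strategy is to apply Theorem \ref{NL} on larger and larger space--time cylinders and then let their size tend to infinity. First I would fix arbitrary $R>0$ and $T>0$. Since $u$ is an ancient solution it is in particular defined on $B_R(\partial M)\times[-T,0]=Q_{R,T}(\partial M)$ and satisfies there the Dirichlet condition together with $u_\nu\ge 0$ and $u_t\le au\ln u$; moreover $\mathrm{Ric}_f\ge 0$ and $\mathrm{H}_f\ge 0$ let us take $K=0$ and $L=0$. The point is then to observe that, under the present hypotheses, the remaining constants in Theorem \ref{NL} are harmless. Indeed $1\le u\le B$ forces $B\ge 1$, hence $1+\ln B>0$; combined with $a\le 0$ this gives $\max\{(n-1)\cdot 0+\tfrac{a}{2},0\}=0$ and $\max\{\tfrac{a}{3}(1+\ln B),0\}=0$, so $E=0$. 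Also, since $u\ge 1$, we have $D=1+\ln B-\ln(\inf_{Q_{R,T}(\partial M)}u)\le 1+\ln B$ and $\sqrt{1+\ln(B/u)}\le\sqrt{1+\ln B}$. Theorem \ref{NL} therefore yields
\[
\sup_{Q_{R/2,T/2}(\partial M)}\frac{|\nabla u|}{u}\le c(n)\left(\frac{\sqrt{1+\ln B}+1}{R}+\frac{1}{\sqrt T}\right)\sqrt{1+\ln B},
\]
with the right-hand side depending on $R,T$ only through the displayed factors.

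Next I would let $R\to\infty$ and $T\to\infty$. The right-hand side above tends to $0$, while the cylinders $Q_{R/2,T/2}(\partial M)$ exhaust $M\times(-\infty,0]$ (here one uses that $M$ is complete with nonempty compact boundary, so every point of $M$ lies within finite distance of $\partial M$). Hence $|\nabla u|\equiv 0$ on $M\times(-\infty,0]$. Taking $M$ connected (otherwise argue componentwise), this means $u(x,t)=\phi(t)$ for a smooth function $\phi$ on $(-\infty,0]$ with $1\le\phi\le B$.

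Finally, substituting $u=\phi(t)$ into \eqref{nonlieq} and using $\Delta_f\phi=0$ gives the ordinary differential equation $\phi'(t)=a\,\phi(t)\ln\phi(t)$. When $a=0$ this forces $\phi'\equiv 0$, so $u$ is constant. When $a<0$, put $w:=\ln\phi\ge 0$; then $w'=aw$, whence $w(t)=c\,e^{at}$ with $c:=w(0)$, i.e. $u(t)=\exp\{c\,e^{at}\}$, and letting $t\to-\infty$ in the bound $\phi\le B$ (so $c\,e^{at}\le\ln B$ while $e^{at}\to\infty$) forces $c\le 0$, as claimed.

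I do not expect a serious obstacle, since all the analytic difficulty is already contained in Theorem \ref{NL}. The only points needing care are checking that $E$ vanishes and $D$ stays finite precisely because $a\le 0$ and $u\ge 1$ (this is exactly what makes the estimate uniform as $R,T\to\infty$), justifying the exhaustion of $M\times(-\infty,0]$, and the elementary integration of the limiting ODE together with the sign analysis of $c$.
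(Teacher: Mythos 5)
Your proposal is correct and follows essentially the same route as the paper's own proof: apply Theorem \ref{NL} with $K=L=0$, observe that $a\le 0$ and $u\ge 1$ force $E=0$ and $D\le 1+\ln B$, let $R,T\to\infty$ to conclude $|\nabla u|\equiv 0$, and then integrate the resulting ODE $u_t=au\ln u$, ruling out $c>0$ by the boundedness of the ancient solution. Your added remarks on the exhaustion of $M\times(-\infty,0]$ are a slight refinement but not a different argument.
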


The rest of this paper is organized as follows. In Section \ref{sec2}, we will list
some basic results about smooth metric measure spaces with the compact boundary, which
will be used in our proof of Theorem \ref{NL}. In particular, we introduce the weighted
Laplacian comparison, the derivative equality, the Bochner type formula and the space-time
cut-off function. In Section \ref{sec3}, we will adopt arguments of \cite{Wj} and
\cite{KS20} to prove Theorem \ref{NL}. In the end we will apply Theorem \ref{NL}
to prove Corollary \ref{liou2}.

\vspace{.1in}

\textbf{Acknowledgement}.
The authors thank the anonymous referee for making useful comments and
pointing out some errors in an earlier version of the paper.
This work was partially supported by the Natural Science Foundation of Shanghai (17ZR1412800).


\section{Background}\label{sec2}
In this section, we mainly recall some basic results about smooth metric measure spaces with the
compact boundary, which will be used in the proof of our result. We refer the interested reader
to \cite{Sa17, Sakurai17, Saku} for more results. On an $n$-dimensional smooth metric measure
space $(M^n, g, e^{-f}dv)$ with the boundary $\partial M$, for any point $x\in M$, we let
\[
\rho(x)=\rho_{\partial M}(x)=d(x, \partial M),
\]
denote the distance function from the boundary $\partial M$. From the argument of \cite{Sa17},
we may assume that distance function $\rho$ is smooth outside of the cut locus for the
boundary ${\rm Cut}(\partial M)$. In \cite{WZZ}, Wang, Zhang and Zhou proved weighted
Laplacian comparisons for the distance function on smooth metric measure spaces with
the boundary under some assumptions (see also \cite{Saku}). Later, Sakurai \cite{Sakurai17}
proved a general comparison result, which plays an important role in the proof of
Theorem \ref{NL}.

\begin{theorem}\label{Lapcom}
Let $(M^n, g, e^{-f}dv)$ be an $n$-dimensional complete smooth metric measure space with
the compact boundary $\partial M$. Assume that
\[
{\rm Ric}_f\ge-(n-1)K\quad \mathrm{and} \quad \mathrm{H}_f\ge-L
\]
for some constants $K\ge0$ and $L\in\mathbb{R}$. Then
\[
\Delta_f\rho(x)\le (n-1)KR+L
\]
for all $x\in B_R(\partial M)$ outside of ${\rm Cut}(\partial M)$, where
$B_R(\partial M):=\{x|d(x,\partial M)<R\}$.
\end{theorem}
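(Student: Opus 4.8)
The plan is to run a standard Bochner/Riccati argument along geodesics realizing the distance to $\partial M$, carrying along the weight $f$, exactly as in the comparison of \cite{Sakurai17}. Fix $x\in B_R(\partial M)\setminus \mathrm{Cut}(\partial M)$ and let $\gamma:[0,\rho(x)]\to M$ be the unit-speed minimizing geodesic from $\gamma(0)\in\partial M$ to $\gamma(\rho(x))=x$, hitting $\partial M$ orthogonally so that $\gamma'(0)=-\nu$. Since $x\notin\mathrm{Cut}(\partial M)$, the function $\rho$ is smooth near $x$, $|\nabla\rho|\equiv1$, and the integral curves of $\nabla\rho$ are exactly such geodesics; write $m(r):=\Delta\rho(\gamma(r))$ for the (Riemannian) mean curvature of the level set $\{\rho=r\}$ along $\gamma$. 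The first step is to recall the Riccati inequality obtained from the Bochner formula applied to $\rho$: since $\mathrm{Hess}\,\rho$ restricted to $(\nabla\rho)^\perp$ is the shape operator of the level set, Cauchy–Schwarz on the $(n-1)$-dimensional level set gives $|\mathrm{Hess}\,\rho|^2\ge m^2/(n-1)$, and $0=\tfrac12\Delta|\nabla\rho|^2 = |\mathrm{Hess}\,\rho|^2+\langle\nabla\Delta\rho,\nabla\rho\rangle+\mathrm{Ric}(\nabla\rho,\nabla\rho)$ yields
\[
m'(r)\le -\frac{m(r)^2}{n-1}-\mathrm{Ric}(\gamma'(r),\gamma'(r)).
\]

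Next I would pass to the weighted quantity. By definition $\Delta_f\rho = \Delta\rho - \langle\nabla f,\nabla\rho\rangle$, so set $m_f(r):=\Delta_f\rho(\gamma(r)) = m(r) - (f\circ\gamma)'(r)$. Differentiating and substituting the Riccati inequality, together with $\mathrm{Ric}_f=\mathrm{Ric}+\mathrm{Hess}\,f$ so that $\mathrm{Ric}(\gamma',\gamma') = \mathrm{Ric}_f(\gamma',\gamma') - (f\circ\gamma)''(r)$, the second-derivative-of-$f$ terms cancel and one obtains
\[
m_f'(r)\le -\frac{m(r)^2}{n-1}-\mathrm{Ric}_f(\gamma'(r),\gamma'(r))\le -\mathrm{Ric}_f(\gamma'(r),\gamma'(r))\le (n-1)K,
\]
where in the last two inequalities I simply drop the nonpositive term $-m^2/(n-1)$ and invoke the hypothesis $\mathrm{Ric}_f\ge -(n-1)K$. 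Integrating this differential inequality from $0$ to $\rho(x)\le R$ gives $m_f(\rho(x))\le m_f(0) + (n-1)K\rho(x) \le m_f(0)+(n-1)KR$. Finally, the initial value $m_f(0)$ is precisely the weighted mean curvature of $\partial M$ at $\gamma(0)$ with respect to the inner normal $-\nu$, i.e. $m_f(0) = -\mathrm{H}_f(\gamma(0)) \le L$ by the assumption $\mathrm{H}_f\ge -L$. (Here one must be careful with the sign convention: $\Delta\rho$ at the boundary equals the mean curvature computed with respect to $\nabla\rho=-\nu$, which is $-\mathrm{H}$; combined with $-(f\circ\gamma)'(0) = \langle\nabla f,\nu\rangle$ this gives $m_f(0)=-\mathrm{H}+\langle\nabla f,\nu\rangle=-\mathrm{H}_f$.) Combining, $\Delta_f\rho(x)=m_f(\rho(x))\le (n-1)KR+L$, as claimed.

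The only genuinely delicate point is the bookkeeping at $r=0$: getting the sign of the initial term right and verifying that it is exactly $-\mathrm{H}_f$ rather than $+\mathrm{H}_f$ or $-\mathrm{H}$, since the distance function to the boundary increases into $M$ while the outward normal $\nu$ points out of $M$. Everything else is the textbook one-dimensional comparison; there is no issue of minimality of geodesics or smoothness because we have restricted to the complement of $\mathrm{Cut}(\partial M)$, and the cancellation of the $\mathrm{Hess}\,f$ terms is the standard mechanism that makes the weighted comparison work without any extra hypothesis on $f$ (in contrast to Wei–Wylie, where one bounds $\Delta_f$ in terms of $\mathrm{Ric}_f$ alone precisely because of this exact cancellation, at the cost of needing the stronger distance-from-a-hypersurface setup). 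I would therefore present the argument compactly, citing \cite{Sakurai17} for the full technical treatment of the cut locus and smoothness issues, and spell out only the Riccati step and the boundary initial condition.
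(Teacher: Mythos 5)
Your proof is correct. The paper does not actually prove Theorem \ref{Lapcom} itself --- it quotes the result from Sakurai \cite{Sakurai17} --- and your Riccati-comparison argument, including the exact cancellation of the $\mathrm{Hess}\,f$ terms and the sign bookkeeping $m_f(0)=-\mathrm{H}_f\le L$ at the boundary, is precisely the standard proof given in that reference.
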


\begin{remark}
This comparison result was used in the proof of gradient estimatess for linear
equations in \cite{DDW} and \cite{DW}. We would like to point out that this
comparison theorem holds for all $R>0$, which is quite different from the
Wei-Wylie comparison on smooth metric measure spaces without boundary. Indeed,
the Wei-Wylie comparison theorem requires the radius of geodesic ball $R\ge R_0$
for some constant $R_0>0$; see Theorem 3.1 of \cite{[WW]}.
\end{remark}

Next, we recall a useful derivative equality, which can be derived from the proof of
the weighted Reilly formula in \cite{[MD]}; see also (24) in Appendix of \cite{[CMZ]}.
The present derivative equality can be regarded as a weighted version of the classical
setting \cite{Rei} and will be used in the proof of gradient estimate for the boundary case.
\begin{proposition}\label{Reilly}
Let $(M^n, g, e^{-f}dv)$ be an $n$-dimensional complete smooth metric measure space with
the compact boundary $\partial M$. For any $u\in C^\infty(M)$,
\begin{equation*}
\begin{aligned}
\frac{1}{2}\left(|\nabla u|^2\right)_\nu&
=u_\nu\left[\Delta_f u-\Delta_{\partial M, f}\left(u|_{\partial M}\right)-\mathrm{H}_f u_\nu\right]
+g_{\partial M}(\nabla_{\partial M}(u|_{\partial M}), \nabla_{\partial M}u_\nu)\\
&\quad-{\rm II}(\nabla_{\partial M}(u|_{\partial M}), \nabla_{\partial M}(u|_{\partial M})),
\end{aligned}
\end{equation*}
where $\nu$ is the outer unit normal vector to $\partial M$, and ${\rm II}$ is the second
fundamental form of $\partial M$ with respect to $\nu$.
\end{proposition}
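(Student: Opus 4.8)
The plan is to first establish the \emph{unweighted} Reilly-type boundary identity by a direct computation in a frame adapted to $\partial M$, and then to observe that introducing the weight $f$ only produces terms that cancel in pairs, so that the weighted formula reduces to the unweighted one. Working at a fixed point $p\in\partial M$, I would decompose $\nabla u=\nabla_{\partial M}(u|_{\partial M})+u_\nu\,\nu$ along $\partial M$; since $\tfrac12\,\nu(|\nabla u|^2)=\mathrm{Hess}\,u(\nabla u,\nu)$, this splits the left-hand side as
\[
\tfrac12\big(|\nabla u|^2\big)_\nu=u_\nu\,\mathrm{Hess}\,u(\nu,\nu)+\mathrm{Hess}\,u\big(\nabla_{\partial M}(u|_{\partial M}),\,\nu\big).
\]

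For the second term I would use $\mathrm{Hess}\,u(X,\nu)=X(u_\nu)-(\nabla_X\nu)(u)$ for $X$ tangent to $\partial M$, together with the facts that $\nabla_X\nu$ is tangent to $\partial M$ (because $|\nu|\equiv1$) and $\langle\nabla_X\nu,Y\rangle=\mathrm{II}(X,Y)$; taking $X=\nabla_{\partial M}(u|_{\partial M})$ gives exactly $g_{\partial M}(\nabla_{\partial M}(u|_{\partial M}),\nabla_{\partial M}u_\nu)-\mathrm{II}(\nabla_{\partial M}(u|_{\partial M}),\nabla_{\partial M}(u|_{\partial M}))$. For the first term I would pick an orthonormal frame $\{e_1,\dots,e_{n-1}\}$ tangent to $\partial M$ and compare the ambient and intrinsic Hessians via the Gauss formula, $\mathrm{Hess}\,u(e_i,e_j)=\mathrm{Hess}_{\partial M}(u|_{\partial M})(e_i,e_j)+\mathrm{II}(e_i,e_j)\,u_\nu$; tracing over $i$ and using $\Delta u=\sum_{i}\mathrm{Hess}\,u(e_i,e_i)+\mathrm{Hess}\,u(\nu,\nu)$ and $\mathrm{H}=\mathrm{tr}\,\mathrm{II}$ yields $\mathrm{Hess}\,u(\nu,\nu)=\Delta u-\Delta_{\partial M}(u|_{\partial M})-\mathrm{H}\,u_\nu$. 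Substituting both computations into the displayed splitting gives the classical Reilly derivative identity.

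It then remains to pass to the weighted quantities. Using $\Delta_f u=\Delta u-\langle\nabla f,\nabla u\rangle$, $\Delta_{\partial M,f}(u|_{\partial M})=\Delta_{\partial M}(u|_{\partial M})-\langle\nabla_{\partial M}(f|_{\partial M}),\nabla_{\partial M}(u|_{\partial M})\rangle$ and $\mathrm{H}_f=\mathrm{H}-f_\nu$, and writing $\langle\nabla f,\nabla u\rangle=\langle\nabla_{\partial M}(f|_{\partial M}),\nabla_{\partial M}(u|_{\partial M})\rangle+f_\nu u_\nu$ on $\partial M$, one checks directly that
\[
u_\nu\Big[\Delta_f u-\Delta_{\partial M,f}(u|_{\partial M})-\mathrm{H}_f u_\nu\Big]=u_\nu\Big[\Delta u-\Delta_{\partial M}(u|_{\partial M})-\mathrm{H}\,u_\nu\Big],
\]
since the tangential component of $\langle\nabla f,\nabla u\rangle$ cancels against the gradient term carried by $\Delta_{\partial M,f}(u|_{\partial M})$, while the normal component $f_\nu u_\nu$ cancels against the $f_\nu u_\nu$ coming from $\mathrm{H}_f u_\nu=(\mathrm{H}-f_\nu)u_\nu$. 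As the remaining two terms of the identity are unchanged, combining this with the classical identity obtained above yields the stated formula. The argument is essentially bookkeeping, so I do not anticipate a genuine obstacle; the only points requiring care are the sign conventions for $\mathrm{II}$, $\mathrm{H}$ and the shape operator relative to the \emph{outer} unit normal $\nu$, and being consistent about which Hessian comparison produces the $+\mathrm{II}\,u_\nu$ term. Alternatively, the identity can be read off directly by retaining the boundary integrand in the proof of the weighted Reilly formula of \cite{[MD]} (equivalently, formula (24) in the appendix of \cite{[CMZ]}) before integrating over $\partial M$, which I would mention as a shortcut.
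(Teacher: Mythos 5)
Your computation is correct, and it is worth noting that the paper itself offers no proof of Proposition \ref{Reilly}: it simply reads the identity off from the boundary integrand in the weighted Reilly formula of Ma--Du \cite{[MD]} (equivalently, formula (24) in the appendix of \cite{[CMZ]}), which is exactly the ``shortcut'' you mention at the end. Your direct pointwise derivation is therefore more self-contained than what the paper provides, and it goes through as written: the splitting $\tfrac12(|\nabla u|^2)_\nu=\mathrm{Hess}\,u(\nabla u,\nu)$ combined with $\nabla u=\nabla_{\partial M}(u|_{\partial M})+u_\nu\nu$, the identity $\mathrm{Hess}\,u(X,\nu)=X(u_\nu)-\langle\nabla_X\nu,\nabla_{\partial M}(u|_{\partial M})\rangle$ for tangential $X$, and the traced Gauss formula $\mathrm{Hess}\,u(\nu,\nu)=\Delta u-\Delta_{\partial M}(u|_{\partial M})-\mathrm{H}u_\nu$ assemble into the classical identity, and the cancellation of the weight terms is exactly as you describe, since restricting $\langle\nabla f,\nabla u\rangle=\langle\nabla_{\partial M}(f|_{\partial M}),\nabla_{\partial M}(u|_{\partial M})\rangle+f_\nu u_\nu$ to $\partial M$ absorbs both the drift term of $\Delta_{\partial M,f}$ and the $-f_\nu u_\nu$ hidden in $\mathrm{H}_f u_\nu$. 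The one point you rightly flag, the sign convention, is consistent here: with $\mathrm{II}(X,Y)=\langle\nabla_X\nu,Y\rangle$ and $\nu$ the outer normal, one has $\mathrm{H}=\mathrm{tr}\,\mathrm{II}>0$ for convex boundaries, which matches the paper's reading of $\mathrm{H}_f\ge 0$ as weak convexity, and it is this convention that produces the $+\mathrm{II}(e_i,e_j)u_\nu$ term in the Hessian comparison and the $-\mathrm{II}$ term in the final formula. In short: your argument is a valid proof, obtained by direct computation rather than by specializing the integrated weighted Reilly formula, and either route yields the stated identity.
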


Meanwhile, we need an important Bochner type formula for equation \eqref{nonlieq} in the
proof of our result, which was similarly discussed in \cite{DD}.
Let $0<u\le B$ for some positive constant $B$ be a solution
to \eqref{nonlieq} in $Q_{R, T}(\partial M)$. Consider the function
\[
h(x,t):=\sqrt{1+\ln\frac{B}{u(x,t)}}=\sqrt{\ln\frac{A}{u(x,t)}}
\]
in $Q_{R, T}(\partial M)$. Obviously,
\[
A=Be\quad \mathrm{and} \quad h(x,t)\ge 1.
\]
As in \cite{DD}, we have the following Bochner type formula, which holds
on smooth metric measure spaces without any assumption on $f$.

\begin{lemma}\label{lemheat1}
Under the same assumptions of Theorem \ref{NL}, for any $(x,t)\in Q_{R,T}(\partial M)$, the function
$w=|\nabla h|^2$ satisfies
\begin{equation*}
\begin{aligned}
\Delta_f w-{w_t}&\ge 2(2h-h^{-1})\langle \nabla w,\nabla h\rangle+2(2+h^{-2}){w^2}\\
&\quad-\left[\max\big\{2(n-1)K+a,\,0\big\}+h^{-2}\max\big\{ a\ln A,\,0 \big\}\right]w,
\end{aligned}
\end{equation*}
where $A:=Be$.
\end{lemma}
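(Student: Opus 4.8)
The plan is to first find the equation satisfied by $h$ and then to feed it into the generalized Bochner-Weitzenb\"ock formula. From $h^2=\ln A-\ln u$ one differentiates in space and time to get $2h\nabla h=-\nabla u/u$ and $2hh_t=-u_t/u$, so that $w=|\nabla h|^2=|\nabla u|^2/(4h^2u^2)$. Applying $\Delta_f$ to $h^2=\ln A-\ln u$, using $\Delta_f\ln u=\Delta_fu/u-|\nabla u|^2/u^2$ and the identity $\Delta_f(h^2)=2h\Delta_fh+2|\nabla h|^2$, and then substituting \eqref{nonlieq} in the form $\Delta_fu/u=u_t/u-a\ln u=-2hh_t-a\ln u$ together with $\ln u=\ln A-h^2$, a short computation gives
\[
\Delta_fh-h_t=(2h-h^{-1})w-\tfrac{a}{2}\,h+\tfrac{a\ln A}{2}\,h^{-1}.
\]

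Next I would apply the generalized Bochner-Weitzenb\"ock formula to $h$, namely $\Delta_fw=2|\mathrm{Hess}\,h|^2+2\langle\nabla\Delta_fh,\nabla h\rangle+2\mathrm{Ric}_f(\nabla h,\nabla h)$, and use that the metric is time-independent to write $w_t=2\langle\nabla h_t,\nabla h\rangle$. Subtracting,
\[
\Delta_fw-w_t=2|\mathrm{Hess}\,h|^2+2\mathrm{Ric}_f(\nabla h,\nabla h)+2\langle\nabla(\Delta_fh-h_t),\nabla h\rangle.
\]
Then I would differentiate the identity for $\Delta_fh-h_t$ above, using $\nabla[(2h-h^{-1})w]=(2+h^{-2})w\,\nabla h+(2h-h^{-1})\nabla w$ and $\nabla(h^{-1})=-h^{-2}\nabla h$, and pair the result with $\nabla h$. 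Since $|\nabla h|^2=w$, this produces precisely the cross term $2(2h-h^{-1})\langle\nabla w,\nabla h\rangle$, the good quadratic term $2(2+h^{-2})w^2$, and the linear-in-$w$ terms $-aw-a(\ln A)h^{-2}w$.

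Finally I would discard the nonnegative term $2|\mathrm{Hess}\,h|^2$, invoke $\mathrm{Ric}_f\ge-(n-1)K$ to get $2\mathrm{Ric}_f(\nabla h,\nabla h)\ge-2(n-1)Kw$, and collect the coefficient of $w$ as $-[2(n-1)K+a]w-a(\ln A)h^{-2}w$. Because $w\ge0$ and $h^{-2}w\ge0$, replacing $2(n-1)K+a$ and $a\ln A$ by their positive parts only decreases the right-hand side, which yields the asserted inequality with $A=Be$.

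The computation itself is routine; the one place that needs care is the nonlinear term $au\ln u$. One must substitute $\ln u=\ln A-h^2$ before differentiating, so that no residual $u$-dependence is left, and one must keep track of the sign of $a$ in the last step, since $a$ may be negative --- this is exactly why the two clipped quantities enter as $\max\{\,\cdot\,,0\}$ rather than as the quantities themselves. There is no regularity obstruction, as $h$ is smooth wherever $u$ is, hence throughout $Q_{R,T}(\partial M)$; in particular no cut-locus issue arises here, unlike in arguments that use the distance function to the boundary.
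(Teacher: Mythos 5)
Your proposal is correct and follows essentially the same route as the paper: derive the evolution equation $h_t=\Delta_f h+(h^{-1}-2h)w+\tfrac a2(h-\ln A\cdot h^{-1})$, plug it into the generalized Bochner--Weitzenb\"ock formula, discard $2|\mathrm{Hess}\,h|^2$, use $\mathrm{Ric}_f\ge-(n-1)K$, and clip the coefficients of $w$ by their positive parts. The only cosmetic difference is that you differentiate $h^2=\ln A-\ln u$ directly while the paper differentiates $u=Ae^{-h^2}$; the computations are identical.
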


\begin{proof}[Proof of Lemma \ref{lemheat1}]
The proof is essentially the same as Lemma 2.1 in \cite{DD} and we include it for the sake of completeness.
Since $u=Ae^{-h^2}$, we compute that
\[
u_t=-2Ahe^{-h^2}h_t, \quad \nabla u=-2Ahe^{-h^2}\nabla h
\]
and
\[
\Delta_f u=-2Ahe^{-h^2}\Delta_f h-2A(1-2h^2)|\nabla h|^2e^{-h^2}.
\]
By \eqref{nonlieq}, we get that
\begin{equation*}
\begin{aligned}
-2Ahe^{-h^2} h_t&=u_t\\
&=\Delta_fu+au\ln u\\
&=-2Ahe^{-h^2}\Delta_f h-2A(1-2h^2)|\nabla h|^2e^{-h^2}-aA(h^2-\ln A)e^{-h^2},
\end{aligned}
\end{equation*}
which is equivalent to
\begin{equation}\label{hequva}
h_t=\Delta_f h+(h^{-1}-2h)|\nabla h|^2+\frac a2\left(h-\ln A\cdot h^{-1}\right).
\end{equation}
On the other hand, by the generalized Bochner-Weitzenb\"ock formula and the curvature
assumption ${\rm Ric}_f\ge-(n-1)K$, we have
\begin{equation*}
\begin{split}
\Delta_f|\nabla h|^2=&2|\mathrm{Hess}\,
h|^2+2\langle\nabla\Delta_f h, \nabla h\rangle+2\mathrm{Ric}_f(\nabla h, \nabla h)\\
\ge&2\langle\nabla\Delta_f h, \nabla h\rangle-2(n-1)K|\nabla h|^2
\end{split}
\end{equation*}
and hence
\[
\Delta_fw-w_t\ge2\langle\nabla\Delta_f h, \nabla h\rangle-2(n-1)Kw-w_t.
\]
Combining this with \eqref{hequva} and using the definition of $w$, we get
\begin{equation}
\begin{split}\label{compineq}
\Delta_fw-w_t&\ge2\langle\nabla h_t, \nabla h\rangle+2\left\langle\nabla[(2h-h^{-1})w], \nabla h\right\rangle\\
&\quad+a\left\langle\nabla(\ln A\cdot h^{-1}-h), \nabla h\right\rangle-2(n-1)Kw-w_t\\
&=2\langle\nabla h_t, \nabla h\rangle+2\left\langle\nabla(2h-h^{-1}), \nabla h\right\rangle w
+2(2h-h^{-1})\langle\nabla w, \nabla h\rangle\\
&\quad-a\ln A\cdot h^{-2}w-aw-2(n-1)Kw-w_t.
\end{split}
\end{equation}
Notice that
\[
2\langle\nabla h_t, \nabla h\rangle=w_t \quad\mathrm{and} \quad\nabla(2h-h^{-1})=(2+h^{-2})\nabla h.
\]
Therefore \eqref{compineq} becomes
\[
\Delta_fw-w_t\ge 2(2+h^{-2})w^2+2(2h-h^{-1})\langle\nabla w, \nabla h\rangle-\left[2(n-1)K+a\right]w-a\ln A\cdot h^{-2}w
\]
and the result follows.
\end{proof}

To prove Theorem \ref{NL}, we also need a space-time cut-off function, which was ever used in
\cite{[LY]}, \cite{[SZ]}, \cite{KS20} and \cite{DDW}.
	\begin{lemma}\label{lemmcut2}
Let $(M^n, g, e^{-f}dv)$ be an $n$-dimensional complete smooth metric measure space
with the compact boundary $\partial M$. There exists a smooth cut-off function
$\psi=\psi(\rho,t)\equiv\psi\left(\rho_{\partial M}(x),t \right)$ supported in
$Q_{R,T}(\partial M)$ and a constant $C_\varepsilon>0$ depending only on $0<\varepsilon<1$
such that
\begin{itemize}
			\item[(i)] $0\le \psi(\rho,t) \le 1$ in $Q_{R,T}(\partial M)$ and $\psi(\rho,t)=1$
in $Q_{R/2, T/2}(\partial M)$.
			\item[(ii)] $\psi$ is decreasing as a radial function of parameter $r$.
			\item[(iii)]
\[
\frac{|\psi_t|}{\psi^{1/2}}\le\frac{C}{T},\quad
|\psi_\rho|\le \frac{C_{\varepsilon}{{\psi}^{\varepsilon }}}{R}
\quad\mathrm{and}\quad
|\psi_{\rho\rho}|\le \frac{C_{\varepsilon}{{\psi}^{\varepsilon }}}{{R^2}},
\]
where $C>0$ is a universal constant.
\end{itemize}
\end{lemma}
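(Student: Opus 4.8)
The construction of $\psi$ is entirely one-dimensional and follows the standard template of \cite{[LY], [SZ], KS20, DDW}; in fact it uses none of the geometric hypotheses of the lemma (completeness, compactness of $\partial M$, the curvature bounds), which will enter only later, when $\psi$ is composed with the distance function $\rho_{\partial M}$ and inserted into the Bochner inequality together with the weighted Laplacian comparison of Theorem \ref{Lapcom}. So the plan is to write down an explicit $\psi(\rho,t)$ and to verify (i)--(iii) by direct differentiation; the only analytic ingredient is the elementary fact that $a^{p}\le a^{q}$ whenever $0\le a\le 1$ and $p\ge q\ge 0$.

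Concretely, first I would fix once and for all a smooth non-increasing function $\varphi\colon\mathbb{R}\to[0,1]$ with $\varphi\equiv 1$ on $(-\infty,\tfrac12]$ and $\varphi\equiv 0$ on $[\tfrac34,\infty)$, and record $M_1:=\sup_{\mathbb R}|\varphi'|$, $M_2:=\sup_{\mathbb R}|\varphi''|$, which are finite absolute constants. Given $0<\varepsilon<1$, put $k=k(\varepsilon):=\lceil 2/(1-\varepsilon)\rceil$, an integer satisfying $k\varepsilon\le k-2\le k-1$. Then I would define
\[
\psi(\rho,t):=\varphi\!\left(\frac{\rho}{R}\right)^{\!k}\varphi\!\left(\frac{-t}{T}\right)^{\!2},\qquad \rho\ge 0,\ \ t\le 0,
\]
and, abusing notation, set $\psi(x,t):=\psi(\rho_{\partial M}(x),t)$ on $M\times(-\infty,0]$. (As a function of its two real arguments $\psi(\rho,t)$ is manifestly smooth; the composed function is smooth wherever $\rho_{\partial M}$ is, and the points of ${\rm Cut}(\partial M)$ are handled, as usual, by Calabi's barrier argument in the proof of Theorem \ref{NL}.) Since $\varphi(\rho/R)=1$ for $\rho\le R/2$ and $\varphi(-t/T)=1$ for $-T/2\le t\le 0$, we have $\psi\equiv1$ on $Q_{R/2,T/2}(\partial M)$; and since $\varphi$ vanishes once its argument reaches $\tfrac34$, the set $\{\psi\ne0\}$ is contained in $\{\rho<\tfrac34R\}\times\{t>-\tfrac34T\}$, whose closure lies inside $Q_{R,T}(\partial M)$. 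This proves (i) and the support statement, while (ii) is immediate from $\varphi'\le 0$ because $\psi_\rho=\tfrac kR\varphi(\rho/R)^{k-1}\varphi'(\rho/R)\varphi(-t/T)^{2}\le0$.

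For (iii) I would differentiate and bound $|\varphi'|\le M_1$, $|\varphi''|\le M_2$. For the time derivative this gives $|\psi_t|\le\tfrac{2M_1}{T}\varphi(\rho/R)^{k}\varphi(-t/T)$, and since $\psi^{1/2}=\varphi(\rho/R)^{k/2}\varphi(-t/T)$ we obtain $|\psi_t|/\psi^{1/2}\le\tfrac{2M_1}{T}\varphi(\rho/R)^{k/2}\le\tfrac{2M_1}{T}$, a universal multiple of $1/T$ — this is exactly why the temporal factor carries the fixed power $2$ rather than $k$: it forces the constant here to be independent of $\varepsilon$. For the spatial derivatives one gets $|\psi_\rho|\le\tfrac{kM_1}{R}\varphi(\rho/R)^{k-1}\varphi(-t/T)^{2}$ and, after $\varphi^{k-1}\le\varphi^{k-2}$, also $|\psi_{\rho\rho}|\le\tfrac{k(k-1)M_1^2+kM_2}{R^{2}}\varphi(\rho/R)^{k-2}\varphi(-t/T)^{2}$. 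Now, using $0\le\varphi\le1$ together with $k-1\ge k\varepsilon$, $k-2\ge k\varepsilon$ and $2\ge 2\varepsilon$, I lower the exponents: for $j\in\{1,2\}$,
\[
\varphi(\rho/R)^{k-j}\varphi(-t/T)^{2}\le\varphi(\rho/R)^{k\varepsilon}\varphi(-t/T)^{2\varepsilon}=\big(\varphi(\rho/R)^{k}\varphi(-t/T)^{2}\big)^{\varepsilon}=\psi^{\varepsilon}.
\]
This yields $|\psi_\rho|\le\tfrac{C_\varepsilon}{R}\psi^{\varepsilon}$ and $|\psi_{\rho\rho}|\le\tfrac{C_\varepsilon}{R^{2}}\psi^{\varepsilon}$ with $C_\varepsilon:=\max\{kM_1,\,k(k-1)M_1^2+kM_2\}$ depending only on $\varepsilon$ through $k(\varepsilon)$. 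I do not expect a genuine obstacle here; the one point needing care is matching the prescribed exponent $\varepsilon$ in the spatial derivative bounds, which is precisely what dictates the $\varepsilon$-dependent power $k(\varepsilon)$ on the spatial factor, while keeping the temporal power fixed so that the $\psi^{1/2}$ estimate retains a universal constant.
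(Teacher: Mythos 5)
Your construction is correct and is exactly the standard one the paper relies on: the paper gives no proof of Lemma \ref{lemmcut2}, simply citing \cite{[LY]}, \cite{[SZ]}, \cite{KS20}, \cite{DDW}, where the cut-off is built precisely as a high power $\varphi(\rho/R)^{k}$ of a fixed one-dimensional bump function times a fixed power of $\varphi(-t/T)$. Your exponent bookkeeping is sound --- $k=\lceil 2/(1-\varepsilon)\rceil$ gives $k\varepsilon\le k-2$, hence $\varphi^{k-j}\varphi(-t/T)^{2}\le\psi^{\varepsilon}$ for $j=1,2$, while keeping the temporal power fixed at $2$ makes the constant in the $|\psi_t|/\psi^{1/2}$ bound universal --- and deferring the non-smoothness of $\rho_{\partial M}$ on ${\rm Cut}(\partial M)$ to Calabi's argument is exactly how the paper handles it in the proof of Theorem \ref{NL}.
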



\section{Souplet-Zhang gradient estimate}\label{sec3}
In this section, we will apply some results of Section \ref{sec2} to prove Theorem
\ref{NL} by adapting arguments of \cite{KS20}, \cite{DDW} and \cite{Wj}.
It is worth to point out that we need carefully to discuss the boundary case.
\begin{proof}[Proof of Theorem \ref{NL}]
Let $w$ be the function in Lemma \ref{lemheat1} and $\psi$ denote the cut-off function
in Lemma \ref{lemmcut2}. Our aim is to estimate $(\Delta_f-\partial_t)(\psi w)$
and carefully analyze the result at a space-time point where the function $\psi w$
attains its maximum. Assume that the space-time maximum of $\psi w$ is reached at
some point $(x_1,t_1)$ in ${Q_{R,T}(\partial M)}$. We will prove the Souplet-Zhang
gradient estimate according to two cases: $x_1\not\in\partial M$ and $x_1\in\partial M$.

\textbf{Case 1: }If $x_1\not\in\partial M$, we may assume without loss of generality
that $x_1\notin {\rm Cut}(\partial M)$ by the Calabi's argument.
Since at $(x_1,t_1)$, we have
\[
\Delta_f(\psi w)\leq0,\quad(\psi w)_t\geq0
\quad \mathrm{and}\quad\nabla(\psi w)=0.
\]
Using the above properties and Lemma \ref{lemheat1}, at $(x_1,t_1)$, we have
\begin{equation*}
\begin{aligned}
0&\ge \Delta_f(\psi w)-(\psi w)_t\\
&=\Delta_f\psi\cdot w+2\langle\nabla w, \nabla\psi\rangle+\psi(\Delta_f w-w_t)-\psi_tw\\
&\ge \Delta_f\psi\cdot w-2\frac{|\nabla \psi|^2}{\psi}w-\psi_t\cdot w
+2(2h-h^{-1})\psi\langle \nabla w,\nabla h \rangle+2(2+h^{-2})\psi{w^2}\\
&\quad-\left[\max\big\{2(n-1)K+a,\,0\big\}+h^{-2}\max \big\{ a\ln A,\,0 \big\}\right]\psi w\\
&=\Delta_f\psi\cdot w-2\frac{|\nabla \psi|^2}{\psi}w-\psi_t\cdot w
-2(2h-h^{-1})\langle\nabla\psi,\nabla h \rangle w+2(2+h^{-2})\psi{w^2}\\
&\quad-\left[\max\big\{2(n-1)K+a,\,0\big\}+h^{-2}\max\big\{a\ln A,\,0 \big\}\right]\psi w.
\end{aligned}
\end{equation*}
This inequality can be written as
\begin{equation*}
\begin{aligned}
2\psi\omega^2\le&\frac{2h^2}{1+2h^2}\frac{|\nabla \psi|^2}{\psi}w
-\frac{2h(1-2h^2)}{1+2h^2}\langle\nabla\psi,\nabla h \rangle w+\frac{h^2}{1+2h^2}\psi_t\cdot w
-\frac{h^2}{1+2h^2}\Delta_f\psi\cdot w\\
&\quad+\left[\frac{h^2}{1+2h^2}\max\big\{2(n-1)K+a,\,0\big\}+\frac{1}{1+2h^2}\max\big\{a\ln A,\,0\big\}\right]\psi w
\end{aligned}
\end{equation*}
at $(x_1,t_1)$. Since $h\ge 1$, and then
\[
0<\frac{h^2}{1+2h^2}\le \frac 12\quad \mathrm{and} \quad 0<\frac{1}{1+2h^2}\le \frac 13,
\]
so the above inequality can be further simplified as
\begin{equation}\label{lefor}
2\psi\omega^2\le\frac{|\nabla \psi|^2}{\psi}w
-\frac{2h(1-2h^2)}{1+2h^2}\langle\nabla\psi,\nabla h \rangle w+\tfrac 12|\psi_t|\cdot w
-\frac{h^2}{1+2h^2}\Delta_f\psi\cdot w+E\psi w
\end{equation}
at $(x_1,t_1)$, where $E:=\max\{(n-1)K+\tfrac a2,\,0\}+\max\{\tfrac{a}{3}\ln A,\,0\}$.

Below we will apply Lemma \ref{lemmcut2} to estimate upper bounds for each term of
the right-hand side of \eqref{lefor}. We remark that for any real numbers $M$ and $N$,
the Young's inequality
\[
C_1C_2\leq\frac{|C_1|^p}{p}+\frac{|C_2|^q}{q},\quad \forall\,\,\, p,q>0
\,\,\,\mathrm{with}\,\,\, \frac1p+\frac1q=1,
\]
where $C_1,C_2\in \mathbb{R}$, will be repeatedly used in the following estimates.
We let $c$ denote a constant may depending on $n$ whose value may change from line to line.
First,
\begin{equation}
\begin{aligned}\label{term1}
\frac{|\nabla\psi|^2}{\psi}w
&=\psi^{1/2}w\cdot\frac{|\nabla\psi|^2}{\psi^{3/2}}\\
&\le\frac{1}{5}\psi\omega^2+c
\left(\frac{|\nabla\psi|^2}{\psi^{3/2}}\right)^2\\
&\le\frac{1}{5}\psi w^2+\frac{c}{R^4}.
\end{aligned}
\end{equation}
Second,
\begin{equation}
\begin{aligned}\label{term2}
-\frac{2h(1-2h^2)}{1+2h^2}\langle\nabla\psi,\nabla h \rangle w
&\le2h|\nabla\psi||\nabla h|w\\
&=2h\frac{|\nabla\psi|}{\psi^{3/4}}(\psi w^2)^{3/4}\\
&\le\frac{1}{5}\psi w^2+h^4\frac{|\nabla\psi|^3}{\psi^3}\\
&\le\frac{1}{5}\psi w^2+\frac{cD^2}{R^4},
\end{aligned}
\end{equation}
where $D:=\ln A-\ln(\inf_{Q_{R,T}(\partial M)}u)$.
Third,
\begin{equation}
\begin{aligned}\label{term3}
\frac 12|\psi_t|\cdot w&=\frac 12\frac{|\psi_t|}{\psi^{1/2}}\cdot\psi^{1/2}w\\
&\le\frac{1}{5}\psi w^2+c\frac{|\psi_t|^2}{\psi}\\
&\le\frac{1}{5}\psi w^2+\frac{c}{T^2}.
\end{aligned}
\end{equation}
Fourth, we will apply Theorem \ref{Lapcom} to the following term
\begin{equation}\label{term4}
\begin{aligned}
-\frac{h^2}{1+2h^2}\Delta_f\psi\cdot w&=-\frac{h^2}{1+2h^2}(\psi_\rho\Delta_f\rho+\psi_{\rho\rho}|\nabla\rho|^2)w\\
&\le \frac{h^2}{1+2h^2}(|\psi_\rho|(n-1)KR+|\psi_\rho|L+|\psi_{\rho\rho}|)w\\
&\le\frac{|\psi_{\rho\rho}|}{2\psi^{1/2}}\psi^{1/2}w+\frac{(n-1)KR+L}{2}\psi^{1/2}w\frac{|\psi_\rho|}{\psi^{1/2}}\\
&\le\frac{1}{5}\psi w^2+c\left[\left(\frac{|\psi_{\rho\rho}|}{\psi^{1/2}}\right)^2
+(K^2R^2+L^2)\left(\frac{|\psi_\rho|}{\psi^{1/2}}\right)^2\right]\\
&\le \frac{1}{5}\psi w^2+\frac{c}{R^4}+cK^2+c\frac{L^2}{R^2}\\
&\le \frac{1}{5}\psi w^2+\frac{c}{R^4}+cK^2+cL^4.
\end{aligned}
\end{equation}
Fifth,
\begin{equation}\label{term5}
E\psi w\leq\frac{1}{5}\psi w^2+cE^2.
\end{equation}
Substituting \eqref{term1}-\eqref{term5} into the right hand side of \eqref{lefor},
we have
\[
\psi w^2\le c\left(\frac{D^2+1}{R^4}+L^4+\frac{1}{T^2}+K^2+E^2\right)
\]
at $(x_1,t_1)$. Then for all $(x,t)\in Q_{R/2,T/2}(\partial M)$, we have $\psi(x,t)\equiv 1$
and therefore
\begin{equation*}
\begin{aligned}
w^2(x,t)&=\psi(x,t)w^2(x,t)\\
&\le\psi(x_1,t_1)w^2(x_1,t_1)\\
&\le c\left(\frac{D^2+1}{R^4}+L^4+\frac{1}{T^2}+K^2+E^2\right).
\end{aligned}
\end{equation*}
Since $w=|\nabla h|^2$, then
\[
|\nabla h|(x,t)\le c\left(\frac{\sqrt{D}+1}{R}+\frac{1}{\sqrt{T}}+\sqrt{K}+L+\sqrt{E}\right)
\]
for all $(x,t)\in Q_{R/2,T/2}(\partial M)$. The desired gradient estimate follows by
using the equality
\[
|\nabla h|=\frac{|\nabla u|}{2u\sqrt{\ln(A/u)}}.
\]

\textbf{Case 2:} If $x_1\in \partial M$, the gradient estimate of Theorem \ref{NL} still holds.
Indeed at the maximum point $(x_1,t_1)$ of $\psi w$, we have $(\psi w)_\nu\ge 0$ and hence
\[
\psi_\nu w+\psi w_\nu=\psi w_\nu\ge0,
\]
which implies
\[
w_\nu\ge0.
\]
Since $w=|\nabla h|^2$, and $h=\sqrt{\log(A/u)}$
satisfies the Dirichlet boundary condition, by Proposition \ref{Reilly}, we have
\begin{equation}\label{boundest}
0\le w_\nu=(|\nabla h|^2)_\nu=2h_\nu(\Delta_f h-\mathrm{H}_f h_\nu).
\end{equation}
Notice that since $u$ satisfies the Dirichlet boundary condition, then
$|\nabla u|=u_{\nu}$ and hence
\begin{equation}\label{cond1}
h_\nu=\frac{-u_\nu}{2u\sqrt{\log(A/u)}}=\frac{-|\nabla u|}{2u\sqrt{\log(A/u)}}=-w^{1/2}.
\end{equation}
We also have
\begin{equation}
\begin{aligned}\label{cond2}
	\Delta_fh&={\rm div}\left(\frac{-\nabla u}{2u\sqrt{\log(A/u)}}\right)-\left\langle\nabla f, \nabla h\right\rangle\\
	&=\frac{-\Delta u}{2u\sqrt{\log(A/u)}}-\frac{1}{2}\left\langle\nabla u, \nabla\left(\frac{1}{u \sqrt{\log(A/u)}}\right)\right\rangle+\left\langle\nabla f, \frac{\nabla u}{2u \sqrt{\log(A/u)}}\right\rangle\\
	&=\frac{-\Delta_f u}{2u \sqrt{\log(A/u)}}+\frac{1}{2}\left(\frac{|\nabla u|^2}{u^2\sqrt{\log(A/u)}}-\frac{|\nabla u|^2}{2u^2(\log(A/u))^{3/2}}\right)\\
	&=-\frac{u_t}{2uh}+\frac{a\ln u}{2h}+\left(2h-\frac 1h\right)w.
\end{aligned}
\end{equation}
Substituting \eqref{cond1} and \eqref{cond2} into \eqref{boundest} yields
\[
-\frac{u_t}{2uh}+\frac{a\ln u}{2h}+\left(2h-\frac 1h\right)w+\mathrm{H}_f w^{1/2}\le0
\]
at $(x_1,t_1)$. Since $\partial_tu\le a u\ln u$ over $\partial M\times[-T, 0]$ by our theorem
assumption, then
\[
-\frac{u_t}{2uh}+\frac{a\ln u}{2h}\ge 0
\]
and hence
\[
\left(2h-\frac 1h\right)w+\mathrm{H}_f w^{1/2}\le0
\]
at $(x_1,t_1)$. Since $h\ge 1$, then $(2h-h^{-1})\ge 1$ and therefore we get
\[
w+\mathrm{H}_f w^{1/2}\le0
\]
at $(x_1,t_1)$. This implies
\[
w(x_1,t_1)=0\quad \mathrm{or} \quad w^{1/2}(x_1,t_1)\le L,
\]
where we used the theorem assumption $\mathrm{H}_f\ge-L$. It means that
\[
\psi w\equiv0 \quad\mathrm{or}\quad (\psi w)(x_1,t_1)\le L^2
\]
on $Q_{R,T}(\partial M)$. The former indicates that $u$ is constant and the conclusion
follows; the latter gives that for all $(x,t)\in Q_{R/2,T/2}(\partial M)$,
$\psi(x,t)\equiv 1$ and
\begin{equation*}
\begin{aligned}
|\nabla h|^2(x,t)&=w(x,t)\\
&=\psi(x,t)w(x,t)\\
&\le\psi(x_1,t_1)w(x_1,t_1)\\
&\le L^2,
\end{aligned}
\end{equation*}
which also implies the conclusion by using
\[
|\nabla h|=\frac{|\nabla u|}{2u\sqrt{\ln(A/u)}}.
\]

\end{proof}

In the end, we apply Theorem \ref{NL} to prove Corollary \ref{liou2}.
\begin{proof}[Proof of Corollary \ref{liou2}]
We assume that ${\rm Ric}_f\ge0$ and $\mathrm{H}_f\ge0$
on smooth metric measure space $(M,g,e^{-f}dv)$ with the compact boundary. Let
$1\le u\le B$ be an ancient solution to \eqref{nonlieq} satisfying
$a\le 0$ in $Q_{R, T}(\partial M)$ with the Dirichlet boundary condition. If
\[
u_\nu\ge0 \quad \mathrm{and} \quad u_t\le au\ln u
\]
over $\partial M\times(-\infty, 0]$, then by Theorem \ref{NL}, there exists a
constant $c(n)$ depending on $n$ such that
\[
\sup\limits_{Q_{R/2, T/2}(\partial M)}\frac{|\nabla u|}{u}
\le c(n)\left(\frac{\sqrt{1+\ln B}+1}{R}+\frac{1}{\sqrt{T}}\right)\sqrt{1+\ln B}.
\]
Letting $R\to\infty$ and $T\to\infty$, we obtain $|\nabla u|=0$ and hence
$u(x,t)=u(t)$ only depends on time parameter $t$. Substituting $u(t)$ into
the equation \eqref{nonlieq} we get the following ordinary differential equation
\[
u_t=au\ln u.
\]
If $a=0$, we can get $u_t=0$ and $u$ is constant. If $a<0$, we directly solve
the above equation and obtain
\[
u(t)=\exp\{ce^{at}\}
\]
for some $c\in\mathbb{R}$. Moreover since $u$ is an ancient solution in
$(-\infty, 0]$, we claim that $c\le0$. Indeed if $c>0$ then $u=\exp\{ce^{at}\}$ is
unbounded in $(-\infty, 0]$, which contradicts with our theorem assumption.
\end{proof}


\end{document}